\documentclass[12pt]{amsart}
\usepackage[cp1251]{inputenc}
\usepackage[english]{babel}
\usepackage{amsmath,amsthm,amssymb}
\usepackage{hyperref}
%\usepackage[english,russian]{babel}
%\renewtheorem 
\newtheorem{theorem}{Theorem}
\newtheorem{example}{Example}
\newtheorem*{remark}{Remark}
\DeclareMathOperator{\dist}{dist}

%\date

\author{Fedor Petrov}
\thanks{The work is supported by RFBR grants 11-01-00677-a, 13-01-00935-a, 13-01-12422-ofi-m and
President of Russia grant MK-6133.2013.1.}
\email{fedyapetrov@gmail.com}
\address{
St. Petersburg   Department   of   V.~A.~Steklov    Institute   of   Mathematics
RAS, St. Petersburg State University, Yaroslavl State Univeristy.}

\title{General removal lemma}

\begin{document}

\maketitle

In this short note we formulate and prove a general result in spirit of
hypergraph removal lemma. We deal with a continuous version, but there
is a general machinery of Elek and Szegedy \cite{ES}, allowing to get
discrete versions. 

Our result generalizes some concrete lemmata of \cite{J1,J2,ZP}. Statements of this type
are used in the theory of continuous graphs, see the recent book \cite{L} and references therein.
Discrete versions have various applications in combinatorics. 

\begin{theorem}\label{main}
1. Let $K$ be a metric compact space, $X$ be a standard continuous measure space (say, $X=[0,1]$ 
with Lebesgue measure). Given positive integer $k$ and a measurable 
(with respect to the Borel sigma-algebra on $K$)
$K$-valued function $f(x_1,x_2,\dots,x_k)$ on $X^k$.
Denote by ${\mathcal N}_k$ the set of all ordered $k$-tuples $I=(i_1,\dots,i_k)$ of mutully distinct
positive integers. For any such $k$-tuple $I$ and any points $y_1,y_2,\dots$ in $X$ denote 
$f_I(y_1,y_2,\dots):=f(y_{i_1},\dots,y_{i_k})$. So, $f$ induces a map $\tilde{f}$ from 
$X^{\mathbb{N}}$ to the Tychonoff's compact set $K^{{\mathcal N}_k}$, which
maps a point $(y_1,y_2,\dots)\in X^{\mathbb{N}}$ 
to the function $I\rightarrow f_I(y_1,\dots,y_n)$ on ${\mathcal N}_k$. 
Let $M$ be a fixed closed 
(hence compact) subset of $K^{{\mathcal N}_k}$ 
and assume that for almost all $y_1,y_2,\dots$ the value $\tilde{f}(y_1,y_2,\dots)$
belongs to $M$. Then there exists a measurable function $g$ on $X^k$, equivalent to $f$, such that 
$\tilde{g}(y_1,y_2,\dots)$ belongs to $M$ for all mutually different $y_1,y_2,\dots$ in $X$. 

2. Assume additionally that $f$ is ``almost symmetric'', i.e. 
\begin{equation}\label{symmetry}
f(x_1,\dots,x_k)=f(x_{\pi_1},\dots,x_{\pi_k})
\end{equation}
for almost all $x_1,\dots,x_k$ in $X$ and any permutation $\pi$ of the set $\{1,2,\dots,k\}$.
Then there exists a measurable symmetric function $g$ on $X^k$, equivalent to $f$, such that 
$\tilde{g}(y_1,y_2,\dots)$ belongs to $M$ for all (not necessary different) $y_1,y_2,\dots$ in $X$. 
\end{theorem}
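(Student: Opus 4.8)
The plan is to break the closed condition $M\subset K^{\mathcal N_k}$ into countably many finite ones and then build $g$ by a compactness argument over a coherent family of ``essential value'' sets. For $N\ge k$ let $\mathcal N_k^{(N)}\subset\mathcal N_k$ be the tuples with all entries $\le N$, let $\pi_N\colon K^{\mathcal N_k}\to K^{\mathcal N_k^{(N)}}$ be the coordinate projection, and set $M_N:=\pi_N(M)$, which is compact; since $M$ is closed, $M=\bigcap_N\pi_N^{-1}(M_N)$ (a point all of whose finite coordinate patterns are attained in $M$ is a limit of points of $M$). For $\bar z=(z_1,\dots,z_N)\in X^N$ write $\Phi_N(\bar z):=(f(z_{i_1},\dots,z_{i_k}))_{(i_1,\dots,i_k)\in\mathcal N_k^{(N)}}$. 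The hypothesis (that $\tilde f\in M$ a.e.\ on $X^{\mathbb N}$), via Fubini, gives for each $N$ that $\Phi_N(\bar z)\in M_N$ for almost all $\bar z\in X^N$. It therefore suffices to produce a measurable $g\equiv f$ such that $(g(y_{i_1},\dots,y_{i_k}))_{I\in\mathcal N_k^{(N)}}\in M_N$ for every $N$ and every tuple $y_1,\dots,y_N$ of distinct points, since this says precisely that $\tilde g(y_1,y_2,\dots)\in M$.

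For $\bar y\in X^N$ let $E_N(\bar y)\subset K^{\mathcal N_k^{(N)}}$ consist of all $v$ such that for every box $B\ni\bar y$ in $X^N$ and every $\varepsilon>0$ the set $\{\bar z\in B:\dist(\Phi_N(\bar z),v)<\varepsilon\}$ has positive measure. As any two boxes about $\bar y$ contain a common smaller one, the essential ranges of $\Phi_N$ over such boxes form a family of closed sets with the finite intersection property, so $E_N(\bar y)$ is nonempty and compact; and since $\Phi_N\in M_N$ a.e.\ while $M_N$ is closed, $E_N(\bar y)\subset M_N$. The family $(E_N)$ is coherent: deleting entries of $\bar y$ and/or permuting them maps $E_N(\bar y)$ into the corresponding $E_{N'}(\bar y')$, which follows from the definition by integrating out the deleted coordinates. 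I also record a matching lemma: let $P'$ be the set of approximate-continuity points of $f$ with pairwise distinct arguments, which has full measure (approximate continuity holds a.e.\ because $K$ is separable metric — apply Lebesgue density to $f^{-1}(U)$ for $U$ in a countable base — and the diagonal is null); then for any distinct $w_1,\dots,w_L$ and any tuples $J_1,\dots,J_s\in\mathcal N_k^{(L)}$ with $(w_{J_t})\in P'$, the set $E_L(w_1,\dots,w_L)$ contains a vector $v$ with $v_{J_t}=f(w_{J_t})$ for all $t$ — because each $\{\bar z:\dist(f(z_{J_t}),f(w_{J_t}))<\varepsilon\}$ has density $1$ at $\bar w$, so a small box meets the intersection of these finitely many sets in positive measure, and an essential value of $\Phi_L$ over that intersection (letting $\varepsilon\to0$ and the box shrink) is such a $v$.

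Now work in the compact space $K^{X^k}$ with the product topology and let $\mathcal G$ be the set of $g\colon X^k\to K$ satisfying (i) $g=f$ on $P'$ and (ii) $(g(y_I))_{I\in\mathcal N_k^{(N)}}\in E_N(\bar y)$ for every $N$ and every distinct $N$-tuple $\bar y$. Each defining condition is closed and involves only finitely many values of $g$, so $\mathcal G$ is closed; and it is nonempty by the finite intersection property: a finite subfamily of these conditions constrains $g$ only at points all of whose coordinates lie in some fixed finite set $\{w_1,\dots,w_L\}$ of distinct reals, and by coherence all type-(ii) constraints in it follow from the single constraint $(g(w_J))_{J\in\mathcal N_k^{(L)}}\in E_L(w_1,\dots,w_L)$, which by the matching lemma can be met by a vector $v$ that in addition realises the finitely many type-(i) equalities; setting $g(w_{j_1},\dots,w_{j_k}):=v_{(j_1,\dots,j_k)}$ on distinct indices and $g$ arbitrary elsewhere gives a member of that finite subfamily. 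Pick $g\in\mathcal G$. Then $g=f$ almost everywhere, so $g$ is measurable with respect to the completed $\sigma$-algebra and equivalent to $f$; and $(g(y_I))_I\in E_N(\bar y)\subset M_N$ for all $N$ and all distinct tuples, whence $\tilde g(y_1,y_2,\dots)\in\bigcap_N\pi_N^{-1}(M_N)=M$. This proves part~1.

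For part~2, almost-symmetry of $f$ forces $\Phi_N(\bar z)_I=\Phi_N(\bar z)_{I'}$ for almost all $\bar z$ whenever $I'$ permutes $I$, hence every vector of every $E_N(\bar y)$ has that symmetry, so the $g$ produced above is automatically symmetric on tuples with distinct arguments, and the prescribed values on $P'$ are symmetry-consistent. The genuinely new issue is that $\tilde g$ must now be controlled on \emph{all} sequences, including those with repeated entries, i.e.\ $g$ must be pinned down on the diagonal of $X^k$ — where the hypothesis gives no information at all, since the diagonal is null and $f$ there is representative-dependent. This is handled by enlarging the coherent family $(E_N)$ so that it also records the configurations approached as points of $\bar y$ collide (a degenerate coordinate being reached through non-degenerate ones in a larger pool of reals), these limits still lying in $M_N$ by its closedness, and then running the same compactness argument over the enlarged family; one checks the enlargement stays coherent and respects the symmetry. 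I expect this collision-aware bookkeeping to be the main obstacle of the proof; the other delicate point, already present in part~1, is the tension that a null subset of $X^k$ meets the (countable) support of $\tilde f(y_1,y_2,\dots)$ for almost every sequence, so $f$ cannot be repaired pointwise — the repair's global coherence must come from the compactness argument, which is exactly why the matching lemma, guaranteeing the essential-value sets are rich enough to reproduce $f$ at its approximate-continuity points, is needed.
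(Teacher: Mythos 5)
Your part 1 is correct, and it is at bottom the same proof as the paper's: Lebesgue density (approximate continuity of $f$, valid since $K$ has a countable base) supplies the values of $g$ off the bad null set, and Tychonoff compactness of $K^{X^k}$ glues the finitely-many-points stages together. The packaging differs in a worthwhile way: where the paper realizes each finite stage probabilistically (replace each $z\in A$ by a random $z'\in\Delta_m(z)$, let $\varepsilon\to0$, extract a convergent subsequence), you work deterministically with the localized essential ranges $E_N(\bar y)$ and a coherence/matching lemma, and you impose the stronger closed condition $(g(y_I))_I\in E_N(\bar y)$ rather than merely $\in M_N$. Your verifications of nonemptiness, $E_N(\bar y)\subset M_N$, coherence under deletion/permutation, and the matching lemma are all sound, and $M=\bigcap_N\pi_N^{-1}(M_N)$ is correct by compactness of $M$. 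This buys a cleaner finite-stage argument at the cost of some bookkeeping; neither version is more general than the other.

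Part 2, however, has a genuine gap: the ``enlarged coherent family recording configurations approached as points collide'' is never constructed, and closedness of $M$ alone cannot produce it. The obstruction is well-definedness. To constrain $g$ at a tuple with repeated entries you must approximate a degenerate $N$-tuple $\bar y$ by nondegenerate ones, which forces you to place \emph{several} distinct sample points near each repeated coordinate; the value $g(y_{i_1},\dots,y_{i_k})$ must then be (approximately) independent of which sample points occupy which slots, i.e.\ $f$ restricted to the sample cloud must be approximately a function of the \emph{type} of the $k$-subset. The hypothesis $\tilde f\in M$ a.e.\ gives no such uniformity: already for $\bar y=(x,\dots,x)$ your enlarged $E_N$ would have to contain a constant vector arising as a limit of $\Phi_N(\bar z)$ with $\bar z$ near the diagonal, and for a generic $f$ (e.g.\ a two-colouring in which both colours have positive density near the diagonal) almost no nearby $\bar z$ has $\Phi_N(\bar z)$ close to constant. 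The missing ingredient is a combinatorial selection principle: the paper partitions $K$ into $c$ pieces of diameter $<\varepsilon$, colours the $k$-subsets of the clouds $\Omega(z)$ accordingly, and applies the multipartite Ramsey theorem (Theorem \ref{Ramsey}) to extract subclouds $\Xi(z)$ on which the colour depends only on the type; only then is $g$ consistently definable on tuples with repetitions. That this cannot be circumvented by soft closedness/coherence arguments is shown by the paper's own Remark: for non-symmetric $f$ (where Ramsey for oriented graphs fails) the conclusion of part 2 is simply false, so any correct proof must use symmetry through some Ramsey-type step, which your sketch does not.
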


Before we pass to the proof, let us mention some examples.

\begin{example} Let $k=2$, $K=\{0,1\}$, and for almost all $y_1,y_2,y_3$ we have 
$$
f(y_1,y_2)=f(y_2,y_1),\, 
f(y_1,y_2)\cdot f(y_2,y_3)\cdot f(y_1,y_3)=0.
$$
(This corresponds to some explicit closed set $M$, of course.) So, $f$ defines a graphon which has almost no triangles. Then the claim is that 
we may save almost all edges so that there will be no triangle at all, i.e. continuous version of the
triangle removal lemma. Similarly we get the hypergraph removal and induced hypergraph removal lemmata and so on.
\end{example}

\begin{example}[\cite{ZP}] Let $k=2$, $K=[0,\infty]$ and for almost all $y_1,y_2,y_3$ we have 
$$
f(y_1,y_2)=f(y_2,y_1),\, 
f(y_1,y_2)+f(y_2,y_3)\geq f(y_1,y_3).
$$
This again corresponds to appropriate closed set $M$. In this case we deal with ``almost metric space'', which therefore may be ``corrected''
by changing distances between null set of pairs to a genuine semimetric space. Values $g(x,x)$ may be redefined as 0, if needed. 
Also, a priori infinite distances may occur. But at fact
almost all distances should be finite, and hence on some set $X'$ of full measure all distances are finite. We may identify the complement 
$X\setminus X'$ of this set with one of points $x_0\in X'$, and so get all distances being finite. 
\end{example}

The proof of part 1 (non-symmetric version) 
consists of two ingredients: Lebesgue density theorem and 
Tychonoff's compactness theorem. In part 2 (symmetric version) we need also
the following standard variant of Ramsey theorem.

\begin{theorem}[Ramsey theorem]\label{Ramsey} Given $c<\infty$ colors, positive integers $k_1,\dots,k_\nu$ and positive integers $N_1,\dots,N_\nu$. 
Then there exist positive integers $R_1,\dots,R_\nu$ so that for disjoint finite sets $A_1,\dots,A_\nu$ of 
cardinalities $|A_i|=R_i$, $1\leq i\leq \nu$, the following statement holds:

assume that each array $(B_1,\dots,B_\nu)$, where $B_i\subset A_i$ and $|B_i|=k_i$ is colored in one 
of our $c$ colors. Then there always exist sets $C_i\subset A_i$, $|C_i|=N_i$, so that colors of 
arrays satisfying $B_i\subset C_i$ are all the same.
\end{theorem}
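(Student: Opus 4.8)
The plan is to derive this multipartite statement from the classical multicolour Ramsey theorem for uniform hypergraphs by induction on the number of parts $\nu$. Recall the classical fact we take for granted: for every uniformity $k$, every finite number of colours $q$ and every $N$ there is a number $R(k,q,N)<\infty$ such that every $q$-colouring of the $k$-element subsets of any set of size $R(k,q,N)$ admits an $N$-element subset all of whose $k$-element subsets receive the same colour. For $\nu=1$ this is exactly our theorem.

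For the passage from $\nu-1$ to $\nu$, let the number of colours $c$, the uniformities $k_1,\dots,k_\nu$ and the targets $N_1,\dots,N_\nu$ be given. First apply the induction hypothesis to the data $k_1,\dots,k_{\nu-1}$, $N_1,\dots,N_{\nu-1}$ and the \emph{same} number $c$ of colours; this produces sizes $R_1,\dots,R_{\nu-1}$. Now set $T:=\binom{R_1}{k_1}\cdots\binom{R_{\nu-1}}{k_{\nu-1}}$, $q:=c^{T}$, and $R_\nu:=R(k_\nu,q,N_\nu)$. Take disjoint sets $A_1,\dots,A_\nu$ of these sizes and a colouring $\chi$ of all arrays $(B_1,\dots,B_\nu)$ as in the statement. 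For each $k_\nu$-subset $B_\nu\subset A_\nu$ regard the restriction $(B_1,\dots,B_{\nu-1})\mapsto\chi(B_1,\dots,B_{\nu-1},B_\nu)$ as a single ``super-colour'' of $B_\nu$; since $R_1,\dots,R_{\nu-1}$ are already fixed, there are at most $q=c^{T}$ possible super-colours. By the classical Ramsey theorem applied inside the part $A_\nu$ with $q$ colours there is $C_\nu\subset A_\nu$, $|C_\nu|=N_\nu$, all of whose $k_\nu$-subsets carry one and the same super-colour; call this common function of $(B_1,\dots,B_{\nu-1})$ by $\psi$.

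The function $\psi$ is a genuine $c$-colouring of the arrays $(B_1,\dots,B_{\nu-1})$ with $B_i\subset A_i$, $|B_i|=k_i$, so the induction hypothesis — invoked for the sizes $R_1,\dots,R_{\nu-1}$ it supplied — yields sets $C_i\subset A_i$, $|C_i|=N_i$ for $i<\nu$, such that $\psi(B_1,\dots,B_{\nu-1})$ equals one fixed colour $a$ whenever $B_i\subset C_i$ for all $i<\nu$. Then for every array with $B_i\subset C_i$ for all $i\le\nu$ we obtain $\chi(B_1,\dots,B_\nu)=\psi(B_1,\dots,B_{\nu-1})=a$, as desired. The only delicate point is the order in which the parameters are chosen: the number of super-colours for the last part depends on $R_1,\dots,R_{\nu-1}$, so those must be fixed first, via the induction hypothesis fed with $c$ (not $q$!) colours, and only afterwards is $R_\nu$ extracted from the ordinary Ramsey theorem. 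Once this dependency chain is kept acyclic the argument is a routine iteration and I expect no genuine obstacle; the disjointness of the $A_i$ is used only to make the arrays $(B_1,\dots,B_\nu)$ notationally unambiguous.
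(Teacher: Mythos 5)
Your induction on the number of parts $\nu$ is correct, and the dependency chain you flag (fix $R_1,\dots,R_{\nu-1}$ from the induction hypothesis with $c$ colours first, then let the number $q=c^{T}$ of ``super-colours'' and hence $R_\nu$ depend on them) is handled properly. The paper itself states this multipartite Ramsey theorem without proof, as a standard variant; your reduction to the classical multicolour hypergraph Ramsey theorem is the canonical way to supply the missing argument, and I see no gap in it.
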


Identify $X$ with $[0,1)$ equipped by the Lebesgue measure $\mu$
and for $x\in X$ denote by $\Delta_m(x)$ the unique semiinterval $[s/m,(s+1)/m)$,
containing $x$ ($s=0,1,\dots,$).

We need the following variant of the density theorem:

\begin{theorem}\label{density}
For almost all $x_1,\dots,x_k$ in $X$ 
for any open set $U\subset K$ containing $f(x_1,\dots,x_k)$ 
$$
\lim_m m^k\cdot \mu\left(f^{-1}(U)\cap \prod_{i=1}^k \Delta_m(x_i) \right)=1.
$$
\end{theorem}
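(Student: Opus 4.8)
The plan is to reduce the statement to the Lebesgue density theorem, using second countability of $K$ to handle the quantifier over all open sets $U$, and a regularity observation to pass from balls to the $m$-adic boxes $Q_m(x):=\prod_{i=1}^k\Delta_m(x_i)$.

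First I would use that a compact metric space is second countable, so I can fix a countable base $\{U_n\}_{n\ge 1}$ of open subsets of $K$ and work with the Borel sets $A_n:=f^{-1}(U_n)\subset X^k$. Writing $X^k=[0,1)^k$ with Lebesgue measure $\mu$, I note that $\mu(Q_m(x))=m^{-k}$, that $x=(x_1,\dots,x_k)\in Q_m(x)$, and that $Q_m(x)$ has diameter $\sqrt k/m$, hence lies inside the centred ball $B(x,\sqrt k/m)$, whose measure is $c_k\,m^{-k}$ with $c_k$ depending only on $k$. Thus $\mu(Q_m(x))\ge c_k^{-1}\mu(B(x,\sqrt k/m))$: the boxes $Q_m(x)$ shrink to $x$ regularly.

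Next I would invoke the Lebesgue differentiation theorem in its form for regularly shrinking families (proved via the Vitali covering lemma and the Hardy--Littlewood maximal inequality): for each fixed $n$ and almost every $x$, $m^k\int_{Q_m(x)}|\mathbf 1_{A_n}(y)-\mathbf 1_{A_n}(x)|\,dy\to 0$ as $m\to\infty$. Restricting to $x\in A_n$, this says precisely that $m^k\mu(A_n\cap Q_m(x))\to 1$ off a $\mu$-null set $Z_n\subset A_n$. Set $Z:=\bigcup_n Z_n$, still $\mu$-null. For $x\notin Z$ and any open $U\ni f(x)$, pick $n$ with $f(x)\in U_n\subset U$; then $x\in A_n\setminus Z_n$, and since $A_n\subset f^{-1}(U)$ and $\mu(f^{-1}(U)\cap Q_m(x))\le\mu(Q_m(x))=m^{-k}$, the quantity $m^k\mu(f^{-1}(U)\cap Q_m(x))$ lies between $m^k\mu(A_n\cap Q_m(x))$ and $1$ for every $m$; as the lower bound tends to $1$, so does the quantity itself.

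The main obstacle, and the only step needing care, is the differentiation step: the boxes $Q_m$ for distinct $m$ are not nested (unlike dyadic cubes), so the martingale convergence theorem does not apply directly; instead one must use the version of the Lebesgue density theorem for families that shrink regularly to a point, which is exactly what the uniform comparison $\mu(Q_m(x))\ge c_k^{-1}\mu(B(x,\sqrt k/m))$ from the previous paragraph licenses. Everything else is bookkeeping: second countability makes the ``for all open $U$'' harmless, and the final squeeze is immediate from $\mu(Q_m(x))=m^{-k}$.
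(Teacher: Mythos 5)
Your proposal is correct and follows exactly the paper's route: fix a countable base of $K$, apply the Lebesgue density theorem to each preimage $f^{-1}(U_n)$, discard the countable union of null sets, and squeeze using $\mu\bigl(\prod_i\Delta_m(x_i)\bigr)=m^{-k}$. The only difference is that you spell out the detail the paper leaves implicit --- that the non-nested boxes $\prod_i\Delta_m(x_i)$ shrink regularly to $x$, so the differentiation theorem for regular families applies --- which is a genuine and correctly handled point, not a new approach.
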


\begin{proof} Consider a countable base of the topology on $K$. It suffices
to take open sets $U$ from the base. For each 
of them this is just the usual Lebesgue density theorem. 
\end{proof}

Denote by $Y\subset X^k$ the set of full measure for which the condition of Theorem \ref{density} holds.

For positive integer $\nu$ define the metric on $K^{\nu}$ by
$$
\dist\left((x_1,\dots,x_\nu),(y_1,\dots,y_{\nu})\right):=\max_{1\leq i\leq \nu} \dist_K(x_i,y_i). 
$$

\begin{proof}[Proof of Theorem \ref{main}] We start with part 1.

At first, we require that $f$ and $g$ coincide on $Y$. This already implies that $g$ is measurable
and equivalent to $f$.

%We call a point $(y_1,\dots,y_n)\in X^n$ \emph{regular}, if $y_1,\dots,y_n$ are mutually different and 
%$(y_{i_1},\dots,y_{i_k})\in Y$ for all mutually distinct indecies $i_1,\dots,i_k\in\{1,2,\dots,n\}$. 
%Then almost all points in $X^n$ are regular. 

%Note that for the regular point $(y_1,\dots,y_n)\in X^n$ we have $\tilde{f}(y_1,\dots,y_n)=\tilde{g}(y_1,\dots,y_n)\in M.$
%Indeed, assume the contrary: $\tilde{f}(y_1,\dots,y_n)\notin M$. Then, since $M$ is closed, there exist open sets
%$U_I$ for $I\in [n,k]$ containing $f_I(y_1,\dots,y_n)$ such that $\prod_I U_I\cap M=\emptyset$. Take large integer $m$ and choose random points
%$y_i'\in \Delta_m(y_i)$. Then by Theorem \ref{density} with probability tending to 1 (as $m$ grows) for all $I\in [n,k]$ 
%we have $f_I(y_1',\dots,y_n')\in U_I$. But it means that with positive (and even close to 1) probability $\tilde{f}(y_1',\dots,y_n')\notin M$,
%a contradiction.

Now we have to define values of $g$ on $X^k\setminus Y$ so that $g$ satisfies the condition of Theorem \ref{main}.
Denote by $\Phi$ the set of $K$-valued functions $g$ on $X^k$ such that
$g=f$ on $Y$. This set $\Phi$ is a closed subset of a Tychonoff's compact space $K^{X^k}$, which may be naturally
identified with $K^{X^k\setminus Y}$.  

The closed set $M$ is an intersection of closed cylindrical sets, say, $M=\cap_{\alpha} M_{\alpha}$.
For fixed $\alpha$ and fixed mutually different points $y_1,y_2,\dots$ in $X$ the condition 
\begin{equation}\label{alpha}
\tilde{g}(y_1,y_2,\dots)\in M_{\alpha}
\end{equation} 
defines a closed subset of $\Phi$. We have to prove that all those
closed subsets of $\Phi$ share a common point. So, it suffices to prove that any finite collection 
of such subsets share a common point.
Fix such a collection. It deals only with a finite number of $k$-tuples in $X^k$. 
Denote by $A=\{x_1,\dots,x_n\}$ the finite set of all points in those $k$-tuples. 
Then conditions (\ref{alpha}) hold simultaneously iff 
\begin{equation}\label{mshtrix}
\tilde{g}(x_1,\dots,x_n,\dots)\in M',
\end{equation}
where $M'$ denotes the closed cylindrical set 
determined by $k$-tuples of different indecies not exceeding $n$.
Let's write $\tilde{g}(x_1,\dots,x_n)$ for the LHS of (\ref{mshtrix}), 
since further arguments of $\tilde{g}$ are of no importance. 

We have to define $g$ on all $k$-tuples $(z_1,\dots,z_k)\in A^k$ 
so that 

(i) $\tilde{g}(x_1,\dots,x_n)\in M'$; and 

(ii) $g$ coincides with $f$ on $Y\cap A^k$.  

Fix arbitrary $\varepsilon>0$. Assume that we succeed to define 
$g$ so that 

(i$-\varepsilon$) $\tilde{g}(x_1,\dots,x_n)$ is $\varepsilon$-close to $M'$; and

(ii$-\varepsilon$) $g(y_1,\dots,y_k)$ and $f(y_1,\dots,y_k)$ 
are $\varepsilon$-close in $K$ provided that 
$(y_1,\dots,y_k)\in Y\cap A^k$. 

Then let $\varepsilon$
tend to 0 and choose a convergent subsequence of values of $g$ on all $k$-tuples in $A$.
Clearly this limit function satisfies (i) and (ii), as desired. 

For finding $g$ satisfying (i$-\varepsilon$) and (ii$-\varepsilon$) we take large $m$
and replace any point $z\in A$ to a random point $z'\in \Delta_m(z)$. Then define
$$
g(z_1,\dots,z_k)=f(z_1',\dots,z_k')
$$
for any points $z_1,\dots,z_k$ in $A$. 

Then (i) (therefore (i$-\varepsilon$) aswell) holds with probability 1. Due to Theorem \ref{density},
condition (ii$-\varepsilon$) holds with probability arbitrarily close to 1 
provided that $m$ is large enough. So, with positive probability such $g$ works.

Now we pass to proving part 2 of the theorem. 
At first, we change $M$ so that any function $g$ satisfying
$\tilde{g}(y_1,y_2,\dots)\in M$ must be symmetric. For this we intersect $M$ 
with sets defined by $g(y_1,\dots,y_k)=g(y_{\pi_1},y_{\pi_2},\dots,y_{\pi_k})$.
Of course, $\tilde{f}(y_1,y_2,\dots)\in M$ still holds for almost all $y_1,y_2,\dots\in X$.

We follow the lines of proving part 1. In particular, we find a finite subset $A=\{x_1,\dots,x_n\}$.
The difference is that now we need to check not 
just $\tilde{g}(x_1,\dots,x_n)\in M'$ for appropriate $M'$, but 

(i') $\tilde{g}(y_1,\dots,y_n)\in M'$ for all $y_1,\dots,y_n$ in $A$ (not necessary different).

For this on last step 
instead of replacing each point $z\in A$ to a random point $z'\in \Delta_m(z)$ we fix large number
$R$ to be specified later ($R$ depends only on $K$, $\varepsilon$ and $n=|A|$, and does not depend on $m$) 
and for each $z\in A$ choose $R$ independent random points in $\Delta_m(z)$. They 
form a random set $\Omega(z)$ (of course, sets $\Omega(z),z\in A$, are disjoint sets of cardinality
$R$ with probability 1). 
For arbitrary points $z_1,\dots,z_k$ in $A$ we require
$g(z_1,\dots,z_k)=f(z_1',\dots,z_k')$ for some $z_i'\in \Omega(z_i)$.
Then for large enough $m$ condition (ii$-\varepsilon$) holds with probability almost 1 for all possible
choices $z_i'\in \Omega(z_i)$.

We partition $K$ onto a finite number $c$ of parts so that each part has diameter less then $\varepsilon$.
Let's think that those parts correspond to $c$ different colors.
 
For any set $S=\{y_1,\dots,y_k\}\subset \cup_{z\in A} \Omega(z)$, define its color as a part containing 
$f(y_1,\dots,y_k)$ (with probability 1 this is correct, i.e. does not depend on the order of elements in $S$).  
Define a type of a set $S$ as a function $z\rightarrow |I\cap \Omega(z)|$ on $A$. The number of possible
types depends only on $|A|$ and $k$. Applying Theorem \ref{Ramsey} repeatedly 
for large enough $R$ we may find subsets
$\Xi(z)\subset \Omega(z)$, $|\Xi_i|=n$, so that all subsets $S\subset \cup_{z\in A} \Xi(z)$  of the same
type have the same color. Now we are ready to define $g(z_1,\dots,z_k)$ for all (not necessary distinct)
points $z_1,\dots,z_k$ in $A$. 
%For $z\in A$ denote $b(z)=|\{i: z_i=z\}|$. Then sets of type $b$ 
%in $\cup \Xi(z)$ have the same color. 
We require that $g(z_1,\dots,z_k):=f(z_1',\dots,z_k')$, where $z_i'\in \Xi(z_i)$ are mutually different points,
and $g$ is symmetric on $A^k$. Of course, both conditions may be satisfied. 
Note that for other possible choices of $z_i'$ the value of $g$ moves by a distance at most $\varepsilon$. 

We have to check (i'). Choose mutually different points $y_i'\in \Xi(y_i)$, $i=1,2,\dots,n$.
With probability 1 we have $\tilde{f}(y_1',\dots,y_n')\in M'$. Our construction of $g$
guarantees that $\tilde{g}(y_1,\dots,y_n)$ and $\tilde{f}(y_1',\dots,y_n')$ are $\varepsilon$-close in
$K^{n(n-1)\dots (n-k+1)}$, as desired.

\end{proof}

\begin{remark} It is easy to see that requirement of $f$ being symmetric is essential in a part 2.
Say, if $K=\{0,1\}$, $k=2$ and condition to $f$ is $|f(x,y)-f(y,x)|=1$ for \textit{almost all} $x,y\in X$
(there are many such functions), this can not be satisifed for \textit{all} $x,y$. The reason is that  
Ramsey type theorem does not hold for oriented graphs.

Also, we can not replace subset $I$ to multisets even in the symmetric case. 
Say, if $k=2$, $K=\{0,1\}$ and $|f(x,y)-f(x,x)|=1$ for almost all $x,y$ (which holds
for $f(x,y)=\chi_{x\ne y}$), this can not be made true for all $x,y$.
\end{remark}

I would like to thank Pavel Zatitskii for the fruitful discussions,
and especially L\'aszl\'o Lov\'asz and Svante Janson 
for the motivating questions and comments, which led to a sequence of 
generalizations of the main statement and improvements of the exposition.

\end{document}